\documentclass[11pt]{amsart}
\usepackage{amsmath, amssymb, amsthm, mathrsfs}
\usepackage{geometry}
\newcommand{\amsprimary}[1]{{\footnotesize\noindent AMS 2020 \textit{Mathematics subject
classification:} Primary #1\vspace{1pc}}}
\newcommand{\keywordsnames}[1]{{\footnotesize\noindent\textit{Key words:} #1\vspace{1pc}}}

\title[Maximum Principle, Eigenvalues and Stabilization]{A maximum principle for the $p$-Laplacian, an eigenvalue estimate
and a stabilization phenomenon for the large-$p$ regime}
\author{Kevin Carrillo-Reina and Jean C. Cortissoz}
\address{Universidad de los Andes, Bogot\'a DC, COLOMBIA}
\email{k.carrillor@uniandes.edu.co, jcortiss@uniandes.edu.co}
\date{}

\newtheorem{theorem}{Theorem}[section]
\newtheorem{lemma}[theorem]{Lemma}

\newtheorem{corollary}[theorem]{Corollary}
\theoremstyle{definition}

\newtheorem{remark}[theorem]{Remark}

\begin{document}

\begin{abstract}
We establish an explicit maximum principle for the Dirichlet problem associated with the $p$-Laplacian ($p>1$), where the constant depends on both $p$ and the geometry of the domain. From this result we derive two main applications. First, we obtain a new lower bound for the first nontrivial eigenvalue of the $p$-Laplacian, which improves upon existing estimates in certain parameter regimes and for thin domains. Second, we prove an existence theorem for nonlinear boundary value problems of the form
\[
-\Delta_p u = \lambda f(u) \quad \text{in } \Omega, \qquad u=0 \quad \text{on } \partial \Omega,
\]
with $f$ nonnegative, continuous and nondecreasing. A striking consequence is the emergence of a \emph{stabilization phenomenon}: for every such nonlinearity there exists a threshold $p_0 \colon = p_0(f,\lambda,\Omega)$ such that for all $p \geq p_0$ 
solutions exist. To our knowledge, this stabilization effect with respect to $p$, that apparently has not been observed before, suggests a connection to the $\infty$-Laplacian.
\end{abstract}

\maketitle

\keywordsnames{p-Laplacian; maximum principle; eigenvalue estimate; nonlinear boundary value problems; stabilization phenomenon; infinity Laplacian}

{\amsprimary {35J92, 35B50, 35J70, 35J60}}

\section{Introduction: An Extended Maximum Principle}

The $p$-Laplacian,
\[
\Delta_p u = \operatorname{div}\!\left( |\nabla u|^{p-2} \nabla u \right),
\]
arises naturally as the Euler--Lagrange operator of the variational functional
\[
u \mapsto \int_\Omega |\nabla u|^p \, dV,
\]
and plays a central role in nonlinear potential theory, variational problems, and the study of degenerate and singular elliptic equations. While the classical Laplacian ($p=2$) enjoys well-established maximum and comparison principles \cite{GilbargTrudinger}, the case $p \neq 2$ is more delicate due to the operator's singularity when $1<p<2$ and degeneracy when $p>2$.

\medskip
The goal of this paper is twofold. We first establish a \emph{quantitative maximum principle} for the Dirichlet problem associated with the $p$-Laplacian, where the bound is given explicitly in terms of $p$ and the slab diameter of the domain. Comparison principles for the $p$-Laplacian are known \cite{Tolksdorf1983,GarciaMelianSabina1998};
however, explicit constants of the type proved in this paper do not appear to have been recorded in the literature.

\medskip
Building on this principle, we obtain two applications:
\begin{enumerate}
    \item \textbf{Eigenvalue estimate.} We derive a new lower bound for the first nontrivial Dirichlet eigenvalue of the $p$-Laplacian. Our bound improves upon existing results in certain parameter ranges, particularly when $1<p<2$ and for thin domains. This contributes to the ongoing effort to sharpen eigenvalue estimates for nonlinear elliptic operators \cite{Aghajani2017, BenediktDrabek2012,  KawohlFridman2003}.
    \item \textbf{Stabilization phenomenon.} We prove an existence result for nonlinear boundary value problems of the form
    \[
    -\Delta_p u = \lambda f(u) \quad \text{in } \Omega, \qquad u=0 \quad \text{on } \partial \Omega,
    \]
    where $f$ is positive, continuous and nondecreasing. A remarkable outcome is the discovery of a \emph{stabilization effect}: for any $\lambda>0$ and such nonlinearity $f$, there exists a threshold $p_0\colon =p_0\left(f,\lambda,\Omega\right)$ such that for all $p \geq p_0$ the problem admits a positive solution. To the best of our knowledge, this type of stabilization with respect to the index $p$ has not been reported previously.
\end{enumerate}

This latter phenomenon is of particular interest, as it hints at connections with the theory of the $\infty$-Laplacian.

\medskip
A key geometric quantity in our maximum principle is the slab diameter of the domain, which we now define.
Given a smooth bounded open set, we define its slab diameter as
the infimum of the distance between two parallel hyperplanes such that $\Omega$
is contained in the region bounded by them. 
 
\medskip
Without further ado, we state and prove our 
extended maximum principle for the $p$-Laplacian.

\begin{theorem}[Extended Maximum Principle for the $p$-Laplacian]
\label{thm:maximum_principle}
Let $\Omega \subset \mathbb{R}^n$ be a smooth bounded open set with slab diameter bounded above by $d$, and let $h \in L^\infty(\Omega)$. Then for any $p>1$ the Dirichlet problem
\[
\begin{cases}
-\Delta_p u = h & \text{in } \Omega, \\
u = 0 & \text{on } \partial\Omega,
\end{cases}
\]
admits a unique weak solution $u \in W^{1,p}_0(\Omega) \cap L^\infty(\Omega)$. Moreover, 
\[
\|u\|_{\infty} \leq \left(\frac{p-1}{p}\right)
\left(\frac{d}{2}\right)^{\frac{p}{p-1}} \|h\|_{\infty}^{\frac{1}{p-1}}.
\]
\medskip
Recall that $u\in W^{1,p}_0\left(\Omega\right)$ is a weak solution to the BVP above if
for all $\varphi\in C_c^{\infty}\left(\Omega\right)$ 
\[
\int_{\Omega} \left|\nabla u\right|^{p-2}\nabla u\nabla \varphi \,dV= \int_{\Omega}h\left(x\right)\varphi\,dV
\]
holds.
\end{theorem}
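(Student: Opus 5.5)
Existence and uniqueness come from the direct method of the calculus of variations. I would minimize
\[
J(v)=\frac1p\int_\Omega |\nabla v|^p\,dV-\int_\Omega h v\,dV
\]
over $W^{1,p}_0(\Omega)$. Since $h\in L^\infty\subset L^{p'}(\Omega)$ (as $\Omega$ is bounded), Poincaré's inequality gives coercivity, and $J$ is weakly lower semicontinuous and strictly convex. Hence $J$ has a unique minimizer, and its Euler--Lagrange equation is exactly the weak formulation in the statement.

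Uniqueness among weak solutions also follows from the strict monotonicity of $\xi\mapsto|\xi|^{p-2}\xi$. Take two solutions $u_1,u_2$, subtract their weak formulations, and test with $u_1-u_2$. This testing is allowed by density of $C_c^\infty$ in $W^{1,p}_0$. The result is
\[
\int_\Omega \left(|\nabla u_1|^{p-2}\nabla u_1-|\nabla u_2|^{p-2}\nabla u_2\right)\cdot\nabla(u_1-u_2)\,dV=0,
\]
which forces $\nabla u_1=\nabla u_2$, and therefore $u_1=u_2$.

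For the $L^\infty$ bound I would compare $u$ with an explicit one-dimensional barrier. After a rigid motion, $\Omega$ lies in the slab $\{|x_1|<d/2\}$. Set $M=\|h\|_\infty$ and $q=p/(p-1)$, and take
\[
w(x)=\frac{p-1}{p}\,M^{\frac1{p-1}}\left[\left(\frac d2\right)^{q}-|x_1|^{q}\right].
\]
This is a $C^1$ function of $x_1$ alone. Its derivative is $w'(x_1)=-M^{1/(p-1)}|x_1|^{1/(p-1)}\operatorname{sgn}(x_1)$, so $|w'|^{p-2}w'=-M x_1$. It follows that $-\Delta_p w=M$ in the weak sense on the whole slab; one checks there is no singular contribution at $x_1=0$ because $|w'|^{p-2}w'$ is Lipschitz. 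Moreover $w\ge 0$ on $\partial\Omega$, and $\max w=\frac{p-1}{p}(d/2)^{q}M^{1/(p-1)}$, which is exactly the claimed constant.

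The weak comparison principle then gives $u\le w$. Indeed, $-\Delta_p u=h\le M=-\Delta_p w$ in $\Omega$ and $u\le w$ on $\partial\Omega$. To prove this, test the difference of the two weak formulations with $(u-w)^+\in W^{1,p}_0(\Omega)$; this function vanishes on the boundary since $u=0\le w$ there. The left side is nonnegative by monotonicity and the right side is $\le 0$, so strict monotonicity forces $\nabla(u-w)^+=0$, hence $(u-w)^+=0$. Applying the same argument to $-u$, which solves the problem with $-h$, gives $-u\le w$. Therefore $|u|\le w\le \frac{p-1}{p}(d/2)^{p/(p-1)}\|h\|_\infty^{1/(p-1)}$, which also shows $u\in L^\infty$.

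I expect the main obstacle to be technical rather than conceptual. One has to justify that $w$ is an admissible weak supersolution, since it is only $C^1$ for $1<p<2$ and its flux is only Lipschitz for $p>2$. One also has to justify that $(u-w)^+$ is a legitimate test function in the comparison argument. This requires extending the weak formulations from $C_c^\infty$ to $W^{1,p}_0$ test functions by density, using that the flux $|\nabla u|^{p-2}\nabla u$ lies in $L^{p'}$.
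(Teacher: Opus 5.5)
Your proposal is correct and is essentially the paper's proof. It uses the same one-dimensional barrier $w=\frac{p-1}{p}\|h\|_\infty^{1/(p-1)}[(d/2)^{p/(p-1)}-|x_1|^{p/(p-1)}]$ with $-\Delta_p w=\|h\|_\infty$, compared against $u$ and $-u$; the only differences are that you get existence by minimizing the energy rather than by Minty--Browder, and you prove weak comparison directly by testing with $(u-w)^+$ instead of citing Tolksdorf (and your closing regularity worry is moot, since the flux $|\nabla w|^{p-2}\nabla w=-\|h\|_\infty x_1 e_1$ is linear, hence smooth, for every $p>1$).
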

\begin{proof}
The existence part of the statement is a well known consequence of the Minty-Browder theorem (see Theorem 5.16 in \cite{BrezisBook}), so let us prove the $L^{\infty}$-norm estimate.
Without loss of generality, we may assume that
$\Omega$ is contained in the slab $\mathbb{R}^{n-1}\times \left(-\frac{d}{2},\frac{d}{2}\right)$.
Consider the function
\[
\psi\left(x\right)= \left(\frac{p-1}{p}\right)\left[\left(\frac{d}{2}\right)^{\frac{p}{p-1}}-
\left|x_n\right|^{\frac{p}{p-1}}\right]\left\|h\right\|_{\infty}^{\frac{1}{p-1}}.
\]
Notice that $\psi$ is $C^1$. 
Let \(\alpha = \frac{p}{p-1} > 1\) and define
\[
u(x) = |x_n|^{\alpha}.
\]
Since \(u\) depends only on \(x_n\),
\[
\nabla u = \alpha |x_n|^{\alpha-1} \operatorname{sgn}(x_n) \, e_n,
\qquad
|\nabla u| = \alpha |x_n|^{\alpha-1},
\]
where $e_n=(0,\ldots, 0,1)$. Thus,
\[
|\nabla u|^{p-2} \nabla u
= \left( \alpha |x_n|^{\alpha-1} \right)^{p-2}
\left( \alpha |x_n|^{\alpha-1} \operatorname{sgn}(x_n) \right) e_n
= \alpha^{p-1} |x_n|^{(\alpha-1)(p-1)} \operatorname{sgn}(x_n) \, e_n.
\]
Since \((\alpha - 1)(p-1) = 1\), we get
\[
|\nabla u|^{p-2} \nabla u = \alpha^{p-1} x_n \, e_n.
\]
Taking the divergence,
\[
\Delta_p u = \operatorname{div} \left( \alpha^{p-1} x_n \, e_n \right)
= \alpha^{p-1}.
\]
Therefore,
\[
- \Delta_p \left( |x_n|^{\frac{p}{p-1}} \right)
= - \left( \frac{p}{p-1} \right)^{p-1}.
\]
Hence,
we have that,
\[
-\Delta_p \psi = \left\|h\right\|_{\infty},
\]
in the weak sense.
Therefore, by the Comparison Principle for the $p$-Laplacian
(see Lemma 3.1 in  \cite{Tolksdorf1983} or Section 5 in \cite{GarciaMelianSabina1998})
\[
-\psi \leq u \leq \psi.
\]
Thus, 
\[
\left\|u\right\|_{\infty}\leq \left\|\psi\right\|_{\infty}=\psi\left(0\right)
=\left(\frac{p-1}{p}\right)
\left(\frac{d}{2}\right)^{\frac{p}{p-1}} \|h\|_{\infty}^{\frac{1}{p-1}}.
\]
\end{proof}

In the sections that follow we present the applications
described above starting with an estimate for
the first eigenvalue of the $p$-Laplacian (Section \ref{section:eigenvalue_estimate}),
and an existence theorem for a BVP with the stabilization
phenomenon as a striking consequence (Section \ref{section:existence_result}). We finish
the paper (Section \ref{section:final_digression})
with a digression on a possible connection with solutions to the infinity Laplacian:
the solutions given by our existence theorem form a compact set in some H\"older space
as $p\rightarrow \infty$ which raises the question of a subsequence of solutions converging 
towards a possible viscosity solution to the BVP problem stated above with $p=\infty$.

\section{Application: An Eigenvalue Estimate}
\label{section:eigenvalue_estimate}

Having established our maximum principle, we now apply it to derive a new lower bound for the first eigenvalue of the $p$-Laplacian, which 
is defined as 
\[
\lambda_{1,p}\left(\Omega\right)
=\inf_{v\in W_0^{1,p}\left(\Omega\right)}
\frac{\int_{\Omega}\left|\nabla v\right|^p\, dV}{\int_{\Omega}
\left|u\right|^p \,dV}.
\]
We will give a new estimate from below for this eigenvalue, 
which improves on some known estimates at least in certain regimes.
Our main tool
is the Extended Maximum Principle. Here is our estimate.
\begin{corollary}
    Let $\Omega\subset \mathbb{R}^n$ be a smooth open subset with slab diameter $\leq d$.
    Then, its first $p$-Laplacian eigenvalue satisfies
    \[
    \lambda_{1,p}\left(\Omega\right)\geq \left(\frac{p}{p-1}\right)^{p-1}\left(\frac{2}{d}\right)^{p}. 
    \]
\end{corollary}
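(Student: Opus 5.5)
Apply Theorem \ref{thm:maximum_principle} to a first eigenfunction. Let $u$ be a minimizer of the Rayleigh quotient; such a minimizer exists on a bounded domain by the compactness of $W^{1,p}_0(\Omega)\hookrightarrow L^p(\Omega)$. Replacing $u$ by $|u|$ if necessary, we may take $u\ge 0$. Write $\lambda=\lambda_{1,p}(\Omega)$. Then $u$ is a weak solution of
\[
-\Delta_p u=\lambda |u|^{p-2}u \quad\text{in } \Omega,\qquad u=0 \quad\text{on } \partial\Omega.
\]
Standard regularity (Moser iteration, or the $L^\infty$ bounds of Tolksdorf/Lindqvist type) gives $u\in L^\infty(\Omega)$. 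We normalize so that $\|u\|_\infty=1$.

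The right-hand side $h=\lambda u^{p-1}$ then lies in $L^\infty(\Omega)$ with $\|h\|_\infty=\lambda$. By the uniqueness part of Theorem \ref{thm:maximum_principle}, $u$ is \emph{the} solution of $-\Delta_p u=h$ with zero boundary values. The $L^\infty$ estimate therefore gives
\[
1=\|u\|_\infty\le \frac{p-1}{p}\Big(\frac d2\Big)^{\frac{p}{p-1}}\lambda^{\frac1{p-1}}.
\]
Raising both sides to the power $p-1$ and rearranging yields
\[
\lambda\ge \Big(\frac{p}{p-1}\Big)^{p-1}\Big(\frac 2d\Big)^{p},
\]
which is the claimed bound.

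The main obstacle is justifying that the eigenfunction is bounded, so that $h\in L^\infty$ and the theorem applies. I would cite the known result that eigenfunctions of the $p$-Laplacian on bounded domains lie in $L^\infty$, and are even $C^{1,\alpha}$.

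The existence of an eigenfunction deserves a brief comment. If one prefers to avoid the existence and regularity theory entirely, an alternative is to argue by approximation: take near-minimizers $v$ and compare. That route is messier, so I would rely on the standard existence and boundedness results. Note also that the definition's denominator should read $\int|v|^p$, which is evidently a typo.
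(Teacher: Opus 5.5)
Your proposal is correct and follows the paper's proof. Like the paper, you apply the Extended Maximum Principle to a first eigenfunction with $h=\lambda_{1,p}(\Omega)|u|^{p-2}u$, use $\|h\|_\infty=\lambda_{1,p}(\Omega)\|u\|_\infty^{p-1}$, then cancel $\|u\|_\infty$ and solve for $\lambda_{1,p}(\Omega)$; your remarks on existence, boundedness of the eigenfunction and the use of uniqueness to identify $u$ with the theorem's solution fill in details the paper leaves implicit (and you correctly write $-\Delta_p u$ where the paper has a sign slip).
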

\begin{proof}
    Let $u$ be a first eigenfunction. Then, it satisfies
    \[
    \Delta_p u = \lambda_{1,p}\left(\Omega\right)\left|u\right|^{p-2}u,
    \]
    and hence, by the Extended Maximum Principle,
    \[
    \left\|u\right\|_{\infty}\leq \left(\frac{p-1}{p}\right)
\left(\frac{d}{2}\right)^{\frac{p}{p-1}} 
\left(\lambda_{1,p}\left(\Omega\right)\left\|u\right\|_{\infty}^{p-1}\right)^{\frac{1}{p-1}},
    \]
    and hence,
    \[
    1\leq \left(\frac{p-1}{p}\right)
\left(\frac{d}{2}\right)^{\frac{p}{p-1}}\lambda_{1,p}\left(\Omega\right)^{\frac{1}{p-1}}.
    \]
    Solving for $\lambda_{1,p}\left(\Omega\right)$ gives the estimate.
\end{proof}

\subsection{Comparison with known results}
\label{section:known_results}
Very few results exist regarding lower bounds for the first eigenvalue for the $p$-Laplacian.
In the ball $B_{R}\left(0\right)\subset \mathbb{R}^n$, it is known that
(see \cite{BenediktDrabek2013})
\[
\lambda_{1,p}\left(B_R\right)\geq \frac{n}{R^p}\left(\frac{p}{p-1}\right)^{p-1}.
\]
Notice the similarity with our estimate. For domains contained in a slab
$\mathbb{R}^{n-1}\times \left(-\frac{d}{2},\frac{d}{2}\right)$, Benedikt and Dr\'abek
showed in \cite{BenediktDrabek2012} the estimate
\[
\lambda_{1,p}\left(\Omega\right)\geq \frac{2^p p}{d^p}.
\]
Notice that their estimate is better than ours for large $p$; however, for
$1<p<2$ ours is better. For instance, for $p=\frac{3}{2}$ we get
\[
\lambda_{1,p}\left(\Omega\right)\geq \frac{2^{\frac{3}{2}}\sqrt{3}}{d^{\frac{3}{2}}},
\]
which is better than
\[
\lambda_{1,p}\left(\Omega\right)\geq \frac{2^{\frac{3}{2}}\times 1.5}{d^{\frac{3}{2}}}.
\]
On the other hand, Aghajani and Tehrani gave the following estimate in terms of the
diameter of $\Omega$, extending those known for the ball:
\[
\lambda_{1,p}\left(\Omega\right)\geq \frac{2^p n}{\mbox{diam}\left(\Omega\right)^p}\left(\frac{p}{p-1}\right)^{p-1}.
\]
Clearly, our estimate is better in the case of a domain that
is very thin in one direction but very long in another.

\medskip
Lindqvist showed that
\[
\lambda_{1,p}\left(\Omega\right)\geq \left(\frac{2}{p}\right)^p\lambda_1\left(\Omega\right)^{\frac{p}{2}},
\]
which, given our best estimates for the first eigenvalue of the Laplacian in a convex domain gives
\[
\lambda_{1,p}\left(\Omega\right)\geq \left(\frac{2\pi}{p d}\right)^p,
\]
where $d$ is the diameter of the domain. Again, the estimate given
in this paper is better for large $p$.

\begin{remark}[Comparison with Kawohl–Fridman]
The classical estimate by Kawohl and Fridman \cite{KawohlFridman2003} asserts that for a domain $\Omega$ of diameter $d$, the first Dirichlet eigenvalue of the $p$-Laplacian satisfies
\[
\lambda_{1,p}(\Omega) \geq \left( \frac{h\left(\Omega\right)}{p} \right)^p,
\]
where $h\left(\Omega\right)$ is Cheeger's constant.
In the case of a ball of radius $R$, Kawohl-Fridman gives the estimate 
\[
\lambda_{1,p}(B_R\left(0\right))\geq \left(\frac{n}{p}\right)^p\left(\frac{1}{R}\right)^p,
\]
and the constant $\left(n/p\right)^p\rightarrow 0$ as $p\rightarrow \infty$
 In contrast, our explicit estimate
\[
\lambda_{1,p}(B_R\left(0\right)) \geq \left( \frac{p}{p-1} \right)^{p-1} \left( \frac{1}{R} \right)^p
\]
is sharper for large $p$, since $\left(\dfrac{p}{p-1}\right)^{p-1}\rightarrow e$.


\end{remark}

\section{Application: An Existence Theorem and the Stabilization Phenomenon}
\label{section:existence_result}

As our second application of Theorem \ref{thm:maximum_principle}, we show an existence result for a nonlinear Boundary Value Problem for the $p$-Laplacian,
namely the following theorem, for which we recall that a function 
    $u\in W^{1,p}_0\left(\Omega\right)\cap L^{\infty}\left(\Omega\right)$ is a weak solution to the boundary value problem 
    \begin{equation}
    \label{eq:BVP}
        \left\{
        \begin{array}{l}
        -\Delta_p u = \lambda f\left(u\right), \quad \mbox{in} \quad \Omega,\\
        u=0 \quad \mbox{on} \quad \partial \Omega,
        \end{array}
        \right.
    \end{equation}
    if for every $\varphi\in C_c^\infty(\Omega)$ we have
    \[
\int_{\Omega} \left|\nabla u\right|^{p-2}\nabla u\cdot\nabla \varphi \,dx= \lambda\int_{\Omega}f\left(u\right)\varphi\,dx.
\]
\begin{theorem}
\label{thm:existence}
    Consider the BVP problem \eqref{eq:BVP}
    where $f$ is a nondecreasing, continuous, and nonnegative function. 
    If the map 
    \begin{equation}
    \label{eq:fixed_point_map}
    x\mapsto \lambda^{\frac{1}{p-1}}\left(\frac{p-1}{p}\right)
\left(\frac{d}{2}\right)^{\frac{p}{p-1}}f\left(x\right)^{\frac{1}{p-1}}
     \end{equation}
    has a positive fixed point $x_0$, and (\ref{eq:BVP}) has a nonnegative weak subsolution $u_0$ such that
    $\left\|u_0\right\|_{\infty}<x_0$, and either $f(0)>0$, or $f(0)=0$ and $u_0$ is nontrivial, then \eqref{eq:BVP} has a weak solution $u$
    such that $\left\|u\right\|_{\infty}\leq x_0$.
\end{theorem}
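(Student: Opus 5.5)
The natural route is monotone iteration, started at the subsolution $u_0$. Set $M:=x_0$ and define inductively $u_{k+1}\in W^{1,p}_0(\Omega)\cap L^\infty(\Omega)$ as the unique weak solution, given by Theorem \ref{thm:maximum_principle}, of
\[
-\Delta_p u_{k+1}=\lambda f(u_k)\quad\text{in }\Omega,\qquad u_{k+1}=0\ \text{on }\partial\Omega .
\]
This is well posed as long as $f(u_k)\in L^\infty$, which holds if $0\le u_k\le x_0$ by continuity of $f$.

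\textbf{Invariance of $[0,x_0]$.} Suppose $0\le u_k$ and $\|u_k\|_\infty\le x_0$. Since $f$ is nondecreasing, $\|\lambda f(u_k)\|_\infty\le \lambda f(x_0)$. The bound of Theorem \ref{thm:maximum_principle} then gives
\[
\|u_{k+1}\|_\infty\le \lambda^{\frac1{p-1}}\left(\frac{p-1}{p}\right)\left(\frac d2\right)^{\frac p{p-1}}f(x_0)^{\frac1{p-1}}=x_0 ,
\]
by the fixed point property of \eqref{eq:fixed_point_map}. Nonnegativity of $u_{k+1}$ follows from the comparison principle (Tolksdorf, Garc\'ia-Meli\'an--Sabina), since the right-hand side is $\ge0$ and $0$ solves the problem with zero data. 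The base case holds by hypothesis: $u_0\ge0$ and $\|u_0\|_\infty<x_0$.

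\textbf{Monotonicity.} Since $u_0$ is a weak subsolution, $-\Delta_p u_0\le\lambda f(u_0)=-\Delta_p u_1$ weakly, with $u_0\le 0= u_1$ on $\partial\Omega$ in the trace sense. Comparison gives $u_0\le u_1$. Inductively, if $u_{k-1}\le u_k$, then $f(u_{k-1})\le f(u_k)$ because $f$ is nondecreasing, so comparison gives $u_k\le u_{k+1}$. Thus $(u_k)$ is nondecreasing and bounded by $x_0$, and it converges pointwise and in every $L^q$ to some $u$ with $0\le u\le x_0$.

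\textbf{Passage to the limit (the main obstacle).} Test the equation for $u_{k+1}$ with $u_{k+1}$ itself to get
\[
\int|\nabla u_{k+1}|^p\le \lambda f(x_0)\,x_0\,|\Omega|,
\]
a uniform $W^{1,p}_0$ bound. Hence $u_k\rightharpoonup u$ weakly in $W^{1,p}_0$. Weak convergence alone does not let us pass to the limit in $|\nabla u_k|^{p-2}\nabla u_k$. I would first try the $(S_+)$ property of $-\Delta_p$: subtract the weak equations for $u_{k+1}$ and $u_{j+1}$, test with $u_{k+1}-u_{j+1}$, and use the elementary monotonicity inequalities for $|a|^{p-2}a-|b|^{p-2}b$ (separately for $p\ge2$ and $1<p<2$). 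This yields
\[
\int\bigl(|\nabla u_{k+1}|^{p-2}\nabla u_{k+1}-|\nabla u_{j+1}|^{p-2}\nabla u_{j+1}\bigr)\cdot\nabla(u_{k+1}-u_{j+1})=\lambda\int\bigl(f(u_k)-f(u_j)\bigr)(u_{k+1}-u_{j+1})\to0 .
\]
The right side tends to $0$ by dominated convergence, since $f(u_k)\to f(u)$ boundedly. Therefore $(u_k)$ is Cauchy in $W^{1,p}_0$ and converges strongly. Consequently $|\nabla u_k|^{p-2}\nabla u_k\to|\nabla u|^{p-2}\nabla u$ in $L^{p'}$, and passing to the limit in the weak formulation shows that $u$ is a weak solution of \eqref{eq:BVP} with $\|u\|_\infty\le x_0$.

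\textbf{Role of the last hypothesis.} The condition "$f(0)>0$, or $f(0)=0$ and $u_0\not\equiv0$" guarantees the solution is nontrivial. If $f(0)>0$, then $u\equiv0$ does not solve \eqref{eq:BVP}. If instead $u_0\not\equiv0$, then $u\ge u_0$ rules out $u\equiv 0$. This is not needed for existence itself, but I would record it.
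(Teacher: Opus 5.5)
Your proof is correct. Its skeleton is the paper's: iterate from $u_0$, keep $\|u_k\|_\infty\le x_0$ via the maximum principle and the fixed-point identity, get $u_k\nearrow u$ from comparison and the monotonicity of $f$, and test with $u_{k+1}$ to get a uniform $W^{1,p}_0$ bound.

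The genuine difference is how you upgrade to strong convergence of the gradients. You test the difference of two equations with $u_{k+1}-u_{j+1}$. You then use the strong monotonicity inequalities for $a\mapsto|a|^{p-2}a$, adding H\"older and the energy bound when $1<p<2$, and conclude that the \emph{whole} sequence is Cauchy in $W^{1,p}_0$.

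The paper instead extracts a weakly convergent subsequence, proves $\|\nabla u_{k_m}\|_p\to\|\nabla u\|_p$, and invokes the Radon--Riesz property of the uniformly convex space $W^{1,p}_0$. It handles the fluxes by a.e.\ convergence plus convergence of $L^{p'}$ norms, citing Brezis. The norm convergence rests on the energy identity $\|\nabla u_{m+1}\|_p^p=\lambda\int_\Omega f(u_m)u_{m+1}\to\lambda\int_\Omega f(u)u$. It also uses H\"older in the form $\lambda\int_\Omega f(u_m)u=\ell_m(u)\le\|\nabla u_{m+1}\|_p^{p-1}\|\nabla u\|_p$, which yields $\limsup_m\|\nabla u_m\|_p\le\|\nabla u\|_p$. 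The paper phrases this step through the limit functional $\ell$, which presupposes the conclusion; the estimate via $\ell_m$ is what is actually needed.

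The paper's route avoids the vector inequalities and the case split in $p$, using only H\"older and an abstract convexity property. The cost is working along subsequences and identifying weak limits. Yours needs the case distinction, but it is more direct and quantitative and gives convergence of the full sequence.

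Your remark that the hypothesis ``$f(0)>0$, or $u_0\not\equiv0$'' only serves to make $u$ nontrivial is consistent with the paper, whose proof never uses it either.
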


Before we give a proof of the previous theorem, let us turn to an example.
Consider the BVP
\begin{equation}
    \label{eq:BVP1}
        \left\{
        \begin{array}{l}
        -\Delta_p u = \lambda e^{u^q}, \quad \mbox{in} \quad \Omega,\\
        u=0 \quad \mbox{on} \quad \partial \Omega,
        \end{array}
        \right.
    \end{equation}
    with $q>0$. We observe that $u_0=0$ is a subsolution of (\ref{eq:BVP1}) satisfying the hypotheses in Theorem \ref{thm:existence} if $\lambda>0$ or
    $d>0$ is small enough, or if $p>1$ is large enough, since the map 
    \[
    x\mapsto \lambda^{\frac{1}{p-1}} \left(\frac{p-1}{p}\right)
\left(\frac{d}{2}\right)^{\frac{p}{p-1}}e^{\frac{x^q}{p-1}}
    \]
    has a positive fixed point, and hence the BVP has at least a positive solution.
\begin{proof}
    Now we give a proof of Theorem \ref{thm:existence}. The idea is that the maximum principle produces a uniform bound to the iterative scheme, which we now describe. Indeed,  from the subsolution $u_0$, we define $u_{m+1}$, for $m=0,1,2,3,\dots$, as the solution of the boundary
    value problem
    \[
     -\Delta_p u_{m+1}=\lambda f\left(u_m\right),
     \quad \mbox{in} \quad \Omega, \quad u_{m+1}=0
     \quad \mbox{on}\quad \partial\Omega.
    \]
    From the monotonicity of $f$ and the comparison principle it
    follows that $u_{m}\leq u_{m+1}$. Next, we prove 
    that if $\left\|u_m\right\|_{L^{\infty}\left(\Omega\right)}\leq x_0$ then $\left\|u_{m+1}\right\|_{L^{\infty}\left(\Omega\right)}\leq x_0$.
    In fact,
    \begin{eqnarray*}
        \left\|u_{m+1}\right\|_{\infty}&\leq& 
     \lambda^{\frac{1}{p-1}} \left(\frac{p-1}{p}\right)
\left(\frac{d}{2}\right)^{\frac{p}{p-1}} f\left(\left\|u_m\right\|_{\infty}\right)^{\frac{1}{p-1}}\\
        &\leq&\lambda^{\frac{1}{p-1}}\left(\frac{p-1}{p}\right)
\left(\frac{d}{2}\right)^{\frac{p}{p-1}} f\left(x_0\right)^{\frac{1}{p-1}}=x_0.
    \end{eqnarray*}
    From this, the sequence $(u_m)$ is pointwise convergent to some function $u$ such that $\Vert u\Vert_\infty\leq x_0$. So, as shown in Section \ref{sect:monotone_iteration}, the method of monotone iteration is employed to prove that $u$ is a solution of (\ref{eq:BVP}).
\end{proof}

An unexpected consequence of the theorem above is the following
stabilization result.
\begin{corollary}
\label{cor:existence}
    Let $f:\left[0,\infty\right)\longrightarrow \left(0,\infty\right)$ be a continuous nondecreasing function.
    There exists a $p_0\colon = p_0\left(f,\lambda,\Omega\right)$ such that
    if $p\geq p_0$ then (\ref{eq:BVP}) has at least 
    one positive solution.
\end{corollary}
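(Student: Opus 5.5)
The plan is to apply Theorem \ref{thm:existence} with the trivial subsolution $u_0\equiv 0$. Since $f>0$ on $[0,\infty)$, we have $f(0)>0$. Also $-\Delta_p 0 = 0 \leq \lambda f(0)$, so $u_0=0$ is a nonnegative weak subsolution of \eqref{eq:BVP}. What remains is to show that for $p$ large the map \eqref{eq:fixed_point_map},
\[
T_p(x)=\lambda^{\frac{1}{p-1}}\left(\frac{p-1}{p}\right)\left(\frac{d}{2}\right)^{\frac{p}{p-1}}f(x)^{\frac{1}{p-1}},
\]
has a fixed point $x_0>0$. The condition $\|u_0\|_\infty=0<x_0$ is then automatic, and the theorem produces a solution $u$ with $\|u\|_\infty\le x_0$. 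This $u$ is positive: the iteration is nondecreasing starting from $0$, so $u\ge u_1$, where $-\Delta_p u_1=\lambda f(0)>0$. Since $u_1\geq 0$ by comparison and $u_1\not\equiv 0$, the strong maximum principle (V\'azquez) gives $u_1>0$ in $\Omega$.

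To get the fixed point, fix the target $x=1$ (any fixed positive number works). As $p\to\infty$,
\[
T_p(1)=\lambda^{\frac{1}{p-1}}\left(\frac{p-1}{p}\right)\left(\frac{d}{2}\right)^{\frac{p}{p-1}}f(1)^{\frac{1}{p-1}}\longrightarrow \frac{d}{2},
\]
because $\lambda^{1/(p-1)}\to1$ and $f(1)^{1/(p-1)}\to 1$, the latter using $0<f(1)<\infty$. So if $d/2<1$, there is $p_0$ with $T_p(1)<1$ for $p\ge p_0$. Meanwhile $T_p(0)>0$ because $f(0)>0$. The map $T_p$ is continuous, so $T_p(x)-x$ is positive at $0$ and negative at $1$. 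The intermediate value theorem then gives a fixed point $x_0\in(0,1)$.

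The main obstacle is that the limit $d/2$ depends on the domain, so the choice of target $x$ must scale with $d$. The fix is to take the target $x=d$ (more generally any $x>d/2$). Then
\[
T_p(d)\to \frac{d}{2}<d,
\]
since $f(d)^{1/(p-1)}\to1$. Hence $T_p(d)<d$ for all $p\ge p_0(f,\lambda,\Omega)$, and the same intermediate value argument on $[0,d]$ gives a fixed point $x_0\in(0,d)$. Note that $p_0$ depends on $f$ only through $f(d)$, and on $\Omega$ only through its slab diameter $d$, consistent with the statement. Monotonicity of $f$ is not needed for this step; it enters only through Theorem \ref{thm:existence}.
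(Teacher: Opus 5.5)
Your proof is correct and follows the same route as the paper: you take $u_0\equiv 0$ as the subsolution (using $f(0)>0$), and you get a positive fixed point of \eqref{eq:fixed_point_map} for large $p$ from the intermediate value theorem, since $f(x)^{1/(p-1)}\to 1$. Your version is more explicit than the paper's. You only need pointwise convergence at the single point $x=d$ to get $T_p(d)<d$, which together with $T_p(0)>0$ gives the fixed point. You also supply the strong-maximum-principle argument for positivity of $u$, which the paper leaves implicit.
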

\begin{proof}
    The function $u_0=0$ is a subsolution of (\ref{eq:BVP}) under
    the hypotheses on $f$. Note that 
    \[
    f\left(x\right)^{\frac{1}{p-1}}\rightarrow 1 \quad 
    \mbox{as}\quad p\rightarrow\infty
    \]
    uniformly on compact sets, 
    and hence, by the intermediate value theorem, that for $p$ large enough the function
    \[
    x\mapsto \lambda^{\frac{1}{p-1}}  \left(\frac{p-1}{p}\right)
\left(\frac{d}{2}\right)^{\frac{p}{p-1}}f\left(x\right)^{\frac{1}{p-1}}
    \]
    has at least a positive fixed point. The result follows from
    Theorem \ref{thm:existence}.
\end{proof}

The value of $p_0$ can be estimated explicitly. Let us show how.
Pick $P_1$ and $P_2$ such that 
\begin{itemize}
\item for all $p\geq P_1$
\[
\lambda^{\frac{1}{p-1}}\left(\frac{p-1}{p}\right)
\left(\frac{d}{2}\right)^{\frac{p}{p-1}}f\left(2\lambda d/2\right)^{\frac{1}{p-1}}\leq \frac{3}{2}\lambda 
\frac{d}{2},
\]
\item for all $p\geq P_2$
\[
\lambda^{\frac{1}{p-1}}\left(\frac{p-1}{p}\right)
\left(\frac{d}{2}\right)^{\frac{p}{p-1}} f\left(0\right)^{\frac{1}{p-1}}\leq \frac{5}{4}\lambda \frac{d}{2}.
\]
\end{itemize}
We then let $p_0=\max\left\{P_1,P_2\right\}$. With this choice, an application of the Banach's fixed point shows that the map
(\ref{eq:fixed_point_map}) has a fixed point in $\left(0,2\lambda d/2\right)$. 

\medskip
Let us put some numbers in the previous estimate with a concrete example. Consider the BVP
\[
-\Delta_p u = e^{u^3} \quad \mbox{in} \quad B_1\left(0\right), \quad u=0 \quad \mbox{on}
\quad \partial B_1\left(0\right),
\]
where $B_1\left(0\right)$ is the ball of radius 1 centered at the origin (so here $d=2$). In this case, the previous conditions
translate into finding $P_1$ and $P_2$ such that
\begin{itemize}
    \item for all $p\geq P_1$
    \[
\left(\frac{p-1}{p}\right)
\exp\left(\frac{2^3}{p-1}\right)\leq \frac{3}{2},
\]
\item for all $p\geq P_2$
\[
\left(\frac{p-1}{p}\right)
 \leq \frac{5}{4},
\]
\end{itemize}
which is easily seen to be satisfied for $P_1,P_2=\frac{2^3}{\ln(3/2)}+1$, and thus $p_0=\frac{2^3}{\ln(3/2)}+1$, which says that the BVP has a solution 
for all $p\geq \frac{2^3}{\ln(3/2)}+1$, since the map $p\mapsto\left(\frac{p-1}{p}\right)\exp\left(\frac{2^3}{p-1}\right)$ is strictly decreasing with respect to $p$ in $(1,\infty)$.

\subsection{The method of monotone iteration}
\label{sect:monotone_iteration}
In this subsection we finish the proof of Theorem \ref{thm:existence} by showing that the monotone iteration scheme used
converges towards a weak solution
of (\ref{eq:BVP}). Since this proof has several technical details, we divide it into parts. Before, we define some useful notation. By $\ell_m$, with $m\geq 0$, and $\ell$, we denote the bounded linear functionals defined by
$$\ell_m(v):=\int_\Omega\vert\nabla u_{m+1}\vert^{p-2}\nabla u_{m+1}\cdot\nabla v\, dx,\quad\text{ for all }v\in W_0^{1,p}(\Omega),\quad\text{and}$$
$$\ell(v):=\int_\Omega\vert\nabla u_{}\vert^{p-2}\nabla u_{}\cdot\nabla v\, dx,\quad\text{ for all }v\in W_0^{1,p}(\Omega),$$
respectively. Furthermore, we observe that $\ell_m(u_{m+1})=\Vert\nabla u_{m+1}\Vert_p^p$, for each $m\geq 0$.
\medskip
\begin{lemma}\label{lem:mon_it_1}
    The sequence $(u_m)_m$ is bounded in norm $W_0^{1,p}(\Omega)$.
\end{lemma}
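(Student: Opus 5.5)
The plan is to test the weak formulation of the equation for $u_{m+1}$ against $u_{m+1}$ itself, and then use the uniform $L^\infty$ bound already obtained in the proof of Theorem \ref{thm:existence}. For each $m\geq 0$, $u_{m+1}\in W^{1,p}_0(\Omega)\cap L^\infty(\Omega)$ is the weak solution of $-\Delta_p u_{m+1}=\lambda f(u_m)$ with zero boundary values. The right-hand side $\lambda f(u_m)$ lies in $L^\infty(\Omega)$, so the weak formulation extends by density from $C_c^\infty(\Omega)$ to all test functions $v\in W_0^{1,p}(\Omega)$. Approximating $v$ in $W^{1,p}$ by smooth compactly supported functions, the left side converges because $|\nabla u_{m+1}|^{p-2}\nabla u_{m+1}\in L^{p'}$. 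The right side converges because $L^\infty\subset L^{p'}$ on the bounded set $\Omega$ and $v_k\to v$ in $L^p$. Taking $v=u_{m+1}$ then gives
\[
\|\nabla u_{m+1}\|_p^p=\ell_m(u_{m+1})=\lambda\int_\Omega f(u_m)\,u_{m+1}\,dx .
\]

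Next I will bound the right-hand side uniformly in $m$. From the proof of Theorem \ref{thm:existence}, we have $u_0\le u_m\le u_{m+1}$ and $\|u_m\|_\infty\le x_0$ for all $m$. Since $u_0\geq 0$, this gives $0\le u_m\le x_0$ a.e. Because $f$ is nondecreasing and nonnegative, $0\le f(u_m)\le f(x_0)$. Hence
\[
\|\nabla u_{m+1}\|_p^p\le \lambda f(x_0)\,x_0\,|\Omega|,
\]
and this constant does not depend on $m$. Together with $\|u_0\|_{W^{1,p}}<\infty$, this bounds $\|\nabla u_m\|_p$ uniformly for every $m\geq 0$. Poincaré's inequality on the bounded domain $\Omega$ (or simply $\|u_m\|_p\le x_0|\Omega|^{1/p}$) then bounds the full $W^{1,p}_0$ norm.

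The only delicate point is the justification of testing with $u_{m+1}$ itself, which is the density argument in the first paragraph. Everything else follows directly from the $L^\infty$ bound supplied by the maximum principle, Theorem \ref{thm:maximum_principle}.
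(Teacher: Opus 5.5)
Your proposal is correct and follows the paper's argument. Like the paper, you test the equation for $u_{m+1}$ against $u_{m+1}$ itself and use $0\le u_m,u_{m+1}\le x_0$ together with the monotonicity of $f$ to get $\|\nabla u_{m+1}\|_p^p\le \lambda f(x_0)x_0|\Omega|$. Your version is in fact a bit more careful than the paper's, which writes the integrand as $f(u_m)u_m$ (wrong index, and without $\lambda$) and does not justify the density step that allows testing with $u_{m+1}$.
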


\begin{proof} 
Since $f\geq 0$ is a nondecreasing function, the fact that $\Vert u_m\Vert_\infty\leq x_0$, and that $u_m$ satisfies 
\begin{equation}\label{eq:WeakForm_un}
    \int_{\Omega}\vert \nabla u_{n+1}\vert^{p-2}\nabla u_{n+1}\cdot\nabla\varphi\, dx=\int_{\Omega}f(u_n)\varphi\, dx,\quad\text{ for each }\varphi\in C_0^\infty(\Omega),
\end{equation}
for each $m\geq 1$, imply
$$\Vert\nabla u_{m+1}\Vert_p^p=\int_\Omega f(u_m)u_m\, dx\leq\int_\Omega f(x_0)x_0\, dx=\vert\Omega\vert f(x_0)x_0,\quad\text{for every }m\geq 0.$$
\end{proof} 

\begin{lemma}
    There exists a subsequence $(u_{k_m})_m$ such that $\lim_{m\to\infty}u_{k_m}=u$ in norm $W_0^{1,p}(\Omega)$. 
\end{lemma}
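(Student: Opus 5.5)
Since the sequence $(u_m)_m$ is bounded in $W_0^{1,p}(\Omega)$ by Lemma \ref{lem:mon_it_1}, and $W_0^{1,p}(\Omega)$ is reflexive for $p>1$, I first extract a subsequence $(u_{k_m})_m$ converging weakly in $W_0^{1,p}(\Omega)$. By Rellich--Kondrachov this subsequence converges strongly in $L^p(\Omega)$ and, after a further extraction, almost everywhere. The pointwise monotone limit $u$ of the whole sequence is already known, so the weak limit must be $u$, and $u\in W_0^{1,p}(\Omega)$ with $\|u\|_\infty\le x_0$. To upgrade weak to strong convergence of the gradients I will use the $(S_+)$ property of the $p$-Laplacian. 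Its standard form states that if $v_m\rightharpoonup v$ weakly and
\[
\limsup_{m\to\infty}\int_\Omega |\nabla v_m|^{p-2}\nabla v_m\cdot\nabla(v_m-v)\,dx\le 0,
\]
then $v_m\to v$ strongly. This follows from the uniform convexity of $L^p$ once one shows $\|\nabla v_m\|_p\to\|\nabla v\|_p$. Alternatively, it follows from the monotonicity inequalities
\[
\bigl(|a|^{p-2}a-|b|^{p-2}b\bigr)\cdot(a-b)\ge c_p|a-b|^p \quad (p\ge 2),
\]
together with the corresponding Hölder-type variant for $1<p<2$.

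The key step is to verify the $\limsup$ condition. I take $v_m=u_{k_m}=u_{(k_m-1)+1}$ and test the weak formulation of its equation, which has right-hand side $\lambda f(u_{k_m-1})$, with $\varphi=u_{k_m}-u$. This test function is allowed by density, because it lies in $W_0^{1,p}(\Omega)\cap L^\infty(\Omega)$ and the right-hand side is bounded. The result is
\[
\ell_{k_m-1}(u_{k_m}-u)=\lambda\int_\Omega f(u_{k_m-1})(u_{k_m}-u)\,dx.
\]
The factor $f(u_{k_m-1})$ is bounded by $f(x_0)$ because $f$ is nondecreasing and $0\le u_{k_m-1}\le x_0$; here I use the monotonicity $u_0\le u_m$ and the nonnegativity of $u_0$. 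Meanwhile $u_{k_m}-u\to 0$ in $L^1(\Omega)$. Hence the right-hand side tends to $0$, which even yields the limit, not just the $\limsup$ inequality.

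Next I subtract $\ell(u_{k_m}-u)$. This term tends to $0$ by weak convergence, since $|\nabla u|^{p-2}\nabla u\in L^{p'}$. Therefore
\[
\int_\Omega\bigl(|\nabla u_{k_m}|^{p-2}\nabla u_{k_m}-|\nabla u|^{p-2}\nabla u\bigr)\cdot\nabla(u_{k_m}-u)\,dx\to 0.
\]
For $p\ge2$ the monotonicity inequality above immediately gives $\|\nabla(u_{k_m}-u)\|_p\to0$. For $1<p<2$ I apply the inequality
\[
|a-b|^p\le C\bigl[(|a|^{p-2}a-|b|^{p-2}b)\cdot(a-b)\bigr]^{p/2}(|a|+|b|)^{(2-p)p/2}
\]
and then Hölder's inequality with exponents $2/p$ and $2/(2-p)$. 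The boundedness of $\|\nabla u_{k_m}\|_p$ again yields strong convergence.

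The main obstacle is the case distinction in the algebraic inequalities and the justification of the test function. If necessary, I would bypass the algebra by proving $\|\nabla u_{k_m}\|_p\to\|\nabla u\|_p$ directly from the two limits above, combined with the inequality
\[
\|\nabla u_{k_m}\|_p^{p-1}\bigl(\|\nabla u_{k_m}\|_p-\|\nabla u\|_p\bigr)\le \ell_{k_m-1}(u_{k_m}-u),
\]
which follows from Hölder's inequality. I would then conclude by uniform convexity (Radon--Riesz), using weak lower semicontinuity of the norm for the reverse inequality.
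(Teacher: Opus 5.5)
Your proof is correct. It agrees with the paper at the start and at the end, but takes a different and more robust route at the one step that matters.

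Both arguments begin the same way: boundedness from Lemma \ref{lem:mon_it_1}, reflexivity, and identification of the weak limit with the monotone pointwise limit $u$. The paper also concludes via uniform convexity, as your fallback does. The difference is how the inequality $\limsup_m\|\nabla u_{k_m}\|_p\le\|\nabla u\|_p$ is obtained.

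The paper computes $\|\nabla u_{m+1}\|_p^p=\lambda\int_\Omega f(u_m)u_{m+1}\,dx\to\lambda\int_\Omega f(u)u\,dx$, which already needs $f(u_m)\to f(u)$. It then compares this with $\ell(u)=\|\nabla u\|_p^p$ through the identity $\ell(v)=\lim_m\int_\Omega f(u_m)v\,dx$. That identity asserts that the nonlinear fluxes $|\nabla u_{m+1}|^{p-2}\nabla u_{m+1}$ converge weakly to $|\nabla u|^{p-2}\nabla u$, i.e.\ essentially that $u$ is already a weak solution. This does not follow from weak convergence of $u_m$, and it is exactly what the lemma is supposed to feed into.

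Your test function $u_{k_m}-u$ sidesteps this. It uses only:
\begin{itemize}
\item the bound $0\le f(u_{k_m-1})\le f(x_0)$;
\item the $L^1$ convergence $u_{k_m}\to u$;
\item weak convergence of $\nabla u_{k_m}$ tested against the fixed element $|\nabla u|^{p-2}\nabla u\in L^{p'}$.
\end{itemize}
As a result, your argument is self-contained and does not use continuity of $f$ at this stage. Since every weak limit point of $(u_m)$ equals $u$, it also gives strong convergence of the whole sequence rather than just a subsequence.

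The only price is the case split $p\ge 2$ versus $1<p<2$ in the monotonicity inequalities. Your H\"older-based fallback (norm convergence plus Radon--Riesz) removes even that.
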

\begin{proof}
Since $W_0^{1,p}(\Omega)$ is a reflexive space, Lemma \ref{lem:mon_it_1} implies that there exists $w\in W_0^{1,p}(\Omega)$  such that $u_{k_m}\rightharpoonup w$ in $W_0^{1,p}(\Omega)$ as $m\to\infty$. 
We next prove that $w\equiv u$. Now, since $\Vert u_m-u\Vert_p\to 0$ \textit{as} $m\to\infty$, the weak convergence implies that for each $\varphi\in C_0^\infty(\Omega)$ we have
    $$\int_\Omega u\,\varphi\, dx=\lim_{m\to\infty}\int_\Omega u_{k_m}\varphi\, dx=\int_\Omega w\,\varphi\, dx,$$
    implies that $u\equiv w$ and hence $u\in W_0^{1,p}(\Omega)$, from which follows that $u\equiv w$ in $\Omega$, that is, $u\in W_0^{1,p}(\Omega)$, and $u_{k_m}\rightharpoonup u$ as $m\to\infty$. Now, we prove that $\Vert \nabla u_{k_m}-\nabla u\Vert_p\to 0$ as $m\to\infty.$
    For this purpose, we take into account that $W_0^{1,p}(\Omega)$ is a uniformly convex space, so we only need to prove that
    $\Vert\nabla u_{k_m}\Vert_p\to\Vert\nabla u\Vert_p$ as $m\to\infty.$
    \medskip
    Observe that the weak convergence implies that $\Vert \nabla u\Vert_{p}\leq \liminf_{m}\Vert \nabla u_{k_m}\Vert_p$. So, we now prove the reverse inequality. Since 
    $$\ell(v)=\lim_{m\to\infty}\int_\Omega f(u_m)v\, dx,\quad\text{ for each }v\in W_0^{1,p}(\Omega),$$
    we observe that \eqref{eq:WeakForm_un} implies that
    $$\vert\ell(v)\vert\leq\left(\lim_{m\to\infty}\int_\Omega f(u_m)\frac{u_{m+1}}{\Vert\nabla u_{m+1}\Vert_p}\, dx\right)\Vert v\Vert_{W_{0}^{1,p}(\Omega)},\quad\text{for each }v\in W_0^{1,p}(\Omega).$$
    Once again, by \eqref{eq:WeakForm_un} we conclude that 
    $$\Vert\nabla u_m\Vert_p^p\to\int f(u)u\, dx,\quad\text{as}\quad m\to\infty,$$
    and consequently,
    $$\Bigg\vert\int_\Omega f(u)v\, dx\Bigg\vert\leq\left[\frac{1}{\left(\int_\Omega f(u)u\, dx\right)^{1/p}}\int_\Omega f(u)u\, dx\right]\Vert v\Vert_{W_{0}^{1,p}(\Omega)}=\left(\int_\Omega f(u)u\, dx\right)^{1-\frac{1}{p}}\Vert v\Vert_{W_{0}^{1,p}(\Omega)},$$
    form which, if $v=u/\Vert\nabla u\Vert_p$, we conclude the reverse inequality. Finally, since $W_0^{1,p}(\Omega)$ is a uniformly convex space, the weak convergence, and the fact that $\lim_{m\to\infty}\Vert\nabla u_{k_m}\Vert_p=\Vert\nabla u\Vert_p$, imply that 
    $$\Vert\nabla u_{k_m}-\nabla u\Vert_p\to 0,\quad\text{as }m\to\infty.$$
\end{proof}
    At this stage, we finish the proof of Theorem \ref{thm:existence}. We observe that there exists a subsequence $(u_{k_m})$ such that
    $$\int_\Omega\vert\nabla u_{k_m}\vert^{p-2}\nabla u_{k_m}\cdot\nabla\varphi\, dx\to\int_\Omega\vert\nabla u\,\vert^{p-2}\nabla u\cdot\nabla\varphi\,dx\quad\text{as}\quad m\to\infty.$$
    Indeed, the fact that there exists a subsequence $(u_{k_m})$ such that $\Vert \nabla u_{k_m}-\nabla u\Vert_p\to 0$ as $m\to\infty$, implies the existence of a further subsequence (not relabeled) $(u_{k_m})_m$ whose gradients converge a.e. to $\nabla u$. Now, the fact that 
    $$\left\|\vert\nabla u_{k_m}\vert^{p-2}\nabla u_{k_m}-\vert\nabla u\,\vert^{p-2}\nabla u\right\|_{\frac{p}{p-1}}\to 0\quad\text{as}\quad m\to\infty,$$ 
    follows by considering the fact that if $(v_m)$ is a sequence in $L^q(\Omega)$, with $1<q<\infty$, and $v_m\to v$ a.e, where $v\in L^q(\Omega)$, and $\Vert v_m\Vert_q\to\Vert v\Vert_q$, then $\Vert v_m-v\Vert_q\to0$ as $m\to\infty$ (see p.p. 123 in \cite{BrezisBook}). In fact, if $q=\frac{p}{p-1}$ and $v_m=\vert\nabla u_{k_m}\vert^{p-2}\nabla u_{k_m}$, we have
    $$\vert\nabla u_{k_m}\vert^{p-2}\nabla u_{k_m}\to\vert\nabla u\,\vert^{p-2}\nabla u\quad\text{ a.e.},$$ and that
    $\left\|\nabla u_{k_m}\right\|_p^p\to\Vert\nabla u\,\Vert_p^p$ can be rewritten as
    $$\left\| \vert\nabla u_{k_m}\vert^{p-2}\nabla u_{k_m} \right\|_{\frac{p}{p-1}}^{\frac{p}{p-1}}\to \left\| \vert\nabla u_{}\vert^{p-2}\nabla u_{} \right\|_{\frac{p}{p-1}}^{\frac{p}{p-1}},\quad\text{as }m\to\infty,$$
   However, the limit $\lim_{k\to\infty}\left\|\nabla u_{k_m}\right\|_p=\Vert\nabla u\,\Vert_p$ was previously proved. Hence, by H\"older's inequality, we obtain that
    $$
         \left|\int_\Omega\left[\vert\nabla u_{k_m}\vert^{p-2}\nabla u_{k_m}-\vert\nabla u\,\vert^{p-2}\nabla u\right]\cdot\nabla\varphi\, dx\right|\leq \left\|\vert\nabla u_{k_m}\vert^{p-2}\nabla u_{k_m}-\vert\nabla u\,\vert^{p-2}\nabla u\right\|_{\frac{p}{p-1}}\Vert\nabla\varphi\Vert_{p},
    $$
    and letting $k\to\infty$, we obtain the desired limit. Therefore,
    $$\int_\Omega\vert\nabla u\vert^{p-2}\nabla u\cdot\nabla\varphi\, dx=\int_\Omega f(u)\varphi\, dx,\quad\text{ for each }\varphi\in C_0^\infty(\Omega).$$

\begin{remark}
    From the proof presented in this section, it should be
    clear to the reader that the continuity assumption
    over $f$ in Theorem \ref{thm:existence} can be relaxed. All that is required
    is that $f\in L^{\infty}_{loc}\left(\Omega\right)$ and that
    the following ``continuity from below" is satisfied:
    if $x_n\nearrow x$ then 
    $f\left(x_n\right)\rightarrow f\left(x\right)$.
\end{remark}

  \section{Digression: A possible connection with the Infinity Laplacian}  
\label{section:final_digression}
From all the previous considerations, since the fixed
points of the family of maps
\[
x\mapsto \lambda^{\frac{1}{p-1}}  \left(\frac{p-1}{p}\right)
\left(\frac{d}{2}\right)^{\frac{p}{p-1}}f\left(x\right)^{\frac{1}{p-1}}
\]
are uniformly bounded as $p\rightarrow\infty$, if we denote by $u_p$ the
solution to (\ref{eq:BVP}) for the parameter $p$, then, since
\[
\int_{\Omega}f\left(u_p\right)u_p\, dx
\]
can be uniformly bounded, 
it follows that there is a constant $M$ independent
of $p$ such that
\[
\left\|u_p\right\|_{W^{1,p}_0\left(\Omega\right)}
\leq M,
\]
and hence, by the the usual proof of Morrey's inequality (see Theorem 4 and  Theorem 5 of Chapter 5 in \cite{EvansBook}), there exists a constant $C$ independent of $p$, for every $p$ large enough, such that
$$\Vert u_p\Vert_{C^{0,1-\frac{n}{p}}(\overline\Omega)}\leq C\Vert u_p\Vert_{W^{1,p}(\Omega)}.$$
Consequently, for every $0<\alpha<1$, it follows that for every $p$ sufficiently large holds
$$\Vert u_p\Vert_{C^{0,\alpha}(\overline \Omega)}\leq\text{Diam}(\Omega)\Vert u_p\Vert_{C^{0,1-\frac{n}{p}}(\overline \Omega)}\leq \text{Diam}(\Omega)C\Vert u_p\Vert_{W_0^{1,p}(\Omega)}\leq \text{Diam}(\Omega)CM.$$
Therefore, for every $0<\alpha<1$ there is a constant $M'$ such that
\[
\left\|u_p\right\|_{C^{0,\alpha}(\overline\Omega)}
\leq M',\quad\text{for all }p\text{ large enough.}
\]
Then, via Arzelà-Ascoli's theorem, there is
a sequence of parameters $p_k\rightarrow \infty$ such that
the corresponding sequence of functions $\left(u_{p_k}\right)_k$ converges 
towards a function $v\in C^{0,\alpha}\left(\Omega\right)$. 
This raises the natural question of whether $v$ related
to solutions of an equation involving the Infinity Laplacian.

\end{document}